\DeclareMathOperator{\Cl}{Cl}
\DeclareMathOperator{\Sym}{Sym}
\DeclareMathOperator{\Disc}{Disc}
\DeclareMathOperator{\St}{St}
\DeclareMathOperator{\Nm}{Nm}
\newcommand{\ZZ}{\mathbb{Z}}      
\newcommand{\QQ}{\mathbb{Q}}      
\newcommand{\cO}{\mathcal{O}}      
\newcommand{\CC}{\mathbb{C}}      
\newcommand{\mfp}{\mathfrak{p}}    
\newcommand{\mfa}{\mathfrak{a}}
\newtheorem{theorem}{Theorem}[section]
\newtheorem{lemma}[theorem]{Lemma}
\theoremstyle{definition}
\theoremstyle{remark}
\title{The Steinitz Realization Problem}
\author{Sameera Vemulapalli}
\address{Department of Mathematics, Harvard University}
\email{vemulapalli@math.harvard.edu}
\subjclass[2010]{11R04 (primary), 11R09, 11R29, 14H05 (secondary)}
\keywords{Algebraic numbers; Polynomials; Class groups; Geometry of numbers}
\begin{document}

\maketitle
\pagenumbering{arabic}

\begin{abstract}
Let $K$ be a number field and let $n \in \ZZ_{>1}$. The \emph{Steinitz realization problem} asks: does every element of $\Cl(K)$ occur as the Steinitz class of a degree $n$ extension of $K$? In this article, we give an affirmative answer to the Steinitz realization problem for all $n$ and $K$.
\end{abstract}

\section{Introduction}

Fix $n \in \ZZ_{> 1}$ a positive integer. To any degree $n$ extension of number fields $L/K$, we may canonically associate an element of the ideal class group of $K$, namely $[\mfa]$ where $\cO_L \simeq \cO_K^{n-1} \oplus \mfa$ as an $\cO_K$-module. The class $[\mfa]$ is called the \emph{Steinitz class} of $L/K$ and is denoted $\St(L/K)$. By a theorem of Hecke, the square of the Steinitz class $\St(L/K)$ is the class of the relative discriminant $\Disc(L/K)$. The \emph{Steinitz realization problem} asks: does every element of $\Cl(K)$ occur as the Steinitz class of a degree $n$ extension of $K$? 

When $n = 2,3,4,5$, there is an affirmative answer to the Steinitz realization problem; in these cases the Steinitz classes of $S_n$-extensions are \emph{equidistributed} in $\Cl(K)$ when degree $n$ number fields are ordered by discriminant. The case $n = 2,3$ was proven by Kable and Wright \cite{kab} and the case $n = 4,5$ was proven by Bhargava, Shankar, and Wang \cite{bsw}. The proofs rely on the parametrizations of degree $n$ number fields; for $n \geq 6$, there are no such parametrizations. The purpose of this article is to give an affirmative answer to the Steinitz realization problem for all $n$.

\begin{theorem}
\label{mainthm}
Every element of $\Cl(K)$ occurs as the Steinitz class of a degree $n$ extension of $K$ with squarefree discriminant.
\end{theorem}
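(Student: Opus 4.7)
The plan is to build $L/K$ as $K(\theta)$, where $\theta$ satisfies a monic polynomial $f \in \cO_K[x]$ of degree $n$ engineered so that $\Disc(f) \cO_K = \mathfrak{f}^2 \cdot \mathfrak{e}$, with $\mathfrak{f}$ a prescribed squarefree integral ideal in class $[\mfc]^{-1}$ and $\mathfrak{e}$ a squarefree integral ideal coprime to $\mathfrak{f}$. Given such an $f$, the standard identities $\St(L/K) = [\cO_L : \cO_K[\theta]]^{-1}$ and $\Disc(\cO_K[\theta]) = [\cO_L : \cO_K[\theta]]^2 \cdot \Disc(L/K)$ pin down the conductor $[\cO_L : \cO_K[\theta]]$ to be exactly $\mathfrak{f}$ (the decomposition of an integral ideal as a square times a squarefree ideal coprime to that square is unique), yielding $\St(L/K) = [\mathfrak{f}]^{-1} = [\mfc]$ and $\Disc(L/K) = \mathfrak{e}$ squarefree.

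First I would fix a squarefree $\mathfrak{f} = \mfp_1 \cdots \mfp_r$ representing $[\mfc]^{-1}$ (a single prime in the inverse class suffices, by Chebotarev), and impose congruence conditions on $f$ at each $\mfp_i$ forcing $\bar f \pmod{\mfp_i}$ to have a simple double root at some residue $a_i$ and $f$ to factor over $K_{\mfp_i}$ as an unramified product $(x-\alpha_i)(x-\beta_i) h_i(x)$ with $\alpha_i \neq \beta_i$ both reducing to $a_i$. These are a finite set of congruences — essentially $f(a_i) \equiv 0 \pmod{\mfp_i^2}$, $f'(a_i) \in \mfp_i \setminus \mfp_i^2$, $f''(a_i) \not\equiv 0 \pmod{\mfp_i}$, and a quadratic-residue condition on the normalized local discriminant selecting the unramified over the ramified lift — guaranteeing $v_{\mfp_i}(\Disc f) = 2$, giving $\cO_K[\theta] \hookrightarrow \cO_L$ local conductor $\mfp_i$ at $\mfp_i$, and leaving $L$ unramified at $\mfp_i$. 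The admissible $f$ thus form a coset of a sublattice of $\cO_K^n$ of positive density, so by geometry of numbers there are $\gg X^{n[K:\QQ]}$ of them of Minkowski height at most $X$.

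The final and hardest step is a squarefree sieve on the admissible $f$: one bounds the count of $f$ for which $\Disc(f) \cO_K / \mathfrak{f}^2$ fails to be squarefree, i.e., for which some prime $\mfq \nmid \mathfrak{f}$ satisfies $\mfq^2 \mid \Disc(f)$. Small primes (of bounded norm) contribute by a direct fiber computation modulo $\mfq^2$; the tail of large primes (norm $\gg X^{1/2}$) requires a Bhargava-style uniformity estimate. For $n \le 5$ this tail is handled via the parametrizations of Kable--Wright and Bhargava--Shankar--Wang, but for $n \ge 6$ no such parametrization exists, and the estimate must be proved intrinsically on the lattice of admissible polynomials. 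This uniform tail bound — controlling the count of $f$ of bounded Minkowski height with a large squared prime dividing $\Disc(f)/\mathfrak{f}^2$ — is the main obstacle. Combined with Hilbert irreducibility to ensure $f$ is irreducible over $K$, the sieve then produces the desired $L/K$.
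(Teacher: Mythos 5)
There is a genuine gap, and you have pinpointed it yourself without resolving it: the tail estimate in the squarefree sieve. Your plan is to sieve over a positive-density coset of admissible $f \in \cO_K^n$ and count those for which $\Disc(f)/\mathfrak{f}^2$ is squarefree. The discriminant is a degree $2n-2$ polynomial in the $n$ coefficients, and controlling the contribution of large primes $\mfq$ (norm $\gg X^{1/2}$, say) with $\mfq^2 \mid \Disc(f)/\mathfrak{f}^2$ has no elementary bound in that generality. Invoking ``a Bhargava-style uniformity estimate'' or the $n\le 5$ parametrizations and then conceding that ``for $n \ge 6$ no such parametrization exists, and the estimate must be proved intrinsically'' is an accurate description of the difficulty but not a proof of the theorem. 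As written, the argument is incomplete precisely at the step you flag as the main obstacle.

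The paper avoids this obstacle entirely by not sieving over all coefficients. Following Kedlaya, one fixes $a_1,\dots,a_{n-1}\in\cO_K$ subject to suitable congruence conditions (including $a_1\in\mfa$, $a_i\equiv -1 \pmod{\mfa}$), sets $Q_a(x) = n(x-a_1/n)(x-a_2)\cdots(x-a_{n-1})$, and considers the one-parameter family $P_{a,b}(x) = b + \int_0^x Q_a(t)\,dt$. Since $P_{a,b}' = Q_a$ has the prescribed roots $a_1/n,a_2,\dots,a_{n-1}$, the discriminant ideal factors as
\[
  \Delta_{a,b} = \bigl(n^n P_{a,0}(a_1/n) + n^n b\bigr)\,\prod_{i=2}^{n-1}\bigl(P_{a,0}(a_i) + b\bigr),
\]
a product of \emph{linear forms in the single variable $b$}. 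Sieving for squarefree values of a product of pairwise-coprime linear forms in one variable over $\cO_K$ is elementary: the number of $b$ with $\lvert b\rvert \le N$ for which some prime of norm $> \sqrt{N}$ has $\mfp^2 \mid \Delta_{a,b}$ is $O(N^{\deg K - 1/2})$, which is exactly the tail bound you could not supply for the full discriminant. This is the missing idea. The remaining bookkeeping (irreducibility via a prime where $P_{a,b}$ is irreducible mod $\mfp_1$, integrality of coefficients, the $\mfa$-local condition $v_\mfa(\Delta_{a,b}) = 2$ exactly) is arranged by the Chinese remainder theorem in the choice of $a_1,\dots,a_{n-1}$ and the congruence conditions on $b$. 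Separately, the paper does not chase the conductor of $\cO_K[\theta]$ directly as you propose; instead it uses Wood's binary-form rings and a twisted ring $R_f(\mfa) = \cO_K^{n-1}\oplus \mfa^{-1}$ defined whenever $f_{n-1}\in\mfa$, $f_n\in\mfa^2$, with $\St(R_f(\mfa)) = [\mfa^{-1}]$ and $\Disc(R_f(\mfa)) = \mfa^{-2}\Disc(R_f)$ by construction. Your conductor bookkeeping is a workable alternative route to the same bookkeeping, but the decisive difference is the choice of polynomial family that linearizes the discriminant.
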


We give a sketch of the proof. A \emph{rank $n$ ring} over $\cO_K$ is a ring that is locally free of rank $n$ as an $\cO_K$-module. By work of Wood \cite{wood}, a binary $n$-ic form 
\[
	f(x,y) = f_0 x^n + \dots + f_n y^n \in \Sym^n(\cO_K \oplus \cO_K)
\]
gives rise to a rank $n$ ring $R_f$. If $\mfa$ is an integral ideal of $\cO_K$ such that $f_{n-1} \in \mfa$ and $f_n \in \mfa^2$, we construct a rank $n$ ring $R_f(\mathbf{a})$ and an inclusion $R_f \xhookrightarrow{} R_f(\mathbf{a})$. When $f$ is irreducible, both rings are orders in a number field. The Steinitz class of $R_f(\mfa)$ is $[\mfa^{-1}]$ and $\Disc(R_f(\mathbf{a})) = \mfa^{-2}\Disc(R_f)$.

If $\Disc(R_f(\mfa))$ is squarefree as an ideal and $f(x,y)$ is irreducible, then $R_f(\mfa)$ is the maximal order in a degree $n$ extension of $K$ with Steinitz class $[\mfa^{-1}]$. Thus, \cref{mainthm} immediately follows from the existence of an appropriate $f(x,y)$.
 
\begin{restatable}{theorem}{sievethm}
\label{sievethm}
For every nonzero proper prime ideal $\mfa$ of $\cO_K$ coprime to $(n!)$ and subject to the condition that $\#(\cO_K/\mfa) > n^{n-2}$, there are infinitely many monic degree $n$ polynomials
\[
	f(x) \coloneqq x^n + f_1 x^{n-1} \dots + f_n
\]
with coefficients in $\cO_K$ such that:
\begin{enumerate}
\item $f$ is irreducible over $K$;
\item $f_{n-1} \in \mfa$ and $f_n \in \mfa^2$;
\item and $\Disc(R_f(\mfa))$ is squarefree as an ideal in $K$.
\end{enumerate}
\end{restatable}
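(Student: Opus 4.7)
My plan is an Ekedahl-style squarefree sieve over $\cO_K$, modelled on the Bhargava--Shankar--Tsimerman proof that a positive proportion of monic integer polynomials have squarefree discriminant. Fix a Minkowski embedding $\cO_K \hookrightarrow K \otimes_\QQ \RR$ and a height $X$, and let $\cF(X)$ denote the set of monic polynomials $f(x) = x^n + f_1 x^{n-1} + \dots + f_n \in \cO_K[x]$ satisfying condition (2) of the theorem and $|f_i|_v \le c_v X^i$ at each archimedean place $v$ (chosen so that the discriminant is of generic size $\asymp X^{n(n-1)}$). A routine Minkowski lattice-point estimate gives $|\cF(X)| = c_{K,\mfa}\,X^{[K:\QQ]\binom{n+1}{2}} + O(X^{[K:\QQ]\binom{n+1}{2}-1})$ with $c_{K,\mfa} > 0$.

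\textbf{Three-step sieve.} I would treat the three conditions in turn. Condition (1), irreducibility of $f$ over $K$, follows from quantitative Hilbert irreducibility applied to the generic monic polynomial $x^n + T_1 x^{n-1} + \dots + T_n$ over $K(T_1, \dots, T_n)$, whose Galois group is $S_n$: the reducible specializations form a thin set and have density zero in $\cF(X)$, even after imposing (2). For condition (3), since $\Disc R_f(\mfa) = \mfa^{-2}(\Disc f)$, the bad event at a prime $\mfp$ is $\mfp^2 \mid (\Disc f)$ for $\mfp \neq \mfa$ and $\mfa^4 \mid (\Disc f)$ for $\mfp = \mfa$. For each prime $\mfp$ I would compute the local density $\rho_\mfp$ by working modulo $\mfp^2$ (resp.\ $\mfa^4$); since the discriminant hypersurface is smooth away from a codimension-two sublocus, $\rho_\mfp = O(\Nm(\mfp)^{-2})$, and at $\mfa$ the coprimality $\gcd(\mfa,(n!)) = 1$ keeps $\rho_\mfa < 1$ by making the local ramification tame and controllable. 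By the Chinese Remainder Theorem and inclusion-exclusion, the count of $f \in \cF(X)$ surviving the sieve over all $\mfp$ with $\Nm(\mfp) \le M(X)$ is $|\cF(X)| \prod_{\Nm(\mfp) \le M(X)}(1 - \rho_\mfp) + O(\text{error})$, and the product tends to a positive constant as $M(X) \to \infty$.

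\textbf{Main obstacle.} The technical heart is the large-prime tail: I need a uniform bound on the number of $f \in \cF(X)$ with $\mfp^2 \mid (\Disc f)$ for $\Nm(\mfp) > M(X)$, of the form $O(|\cF(X)|/\Nm(\mfp)^2 + X^{[K:\QQ](\binom{n+1}{2}-1)}/\Nm(\mfp))$ or comparable, so that summing over large $\mfp$ is $o(|\cF(X)|)$ once $M(X)$ grows moderately with $X$. This is an instance of Ekedahl's / Bhargava's geometric sieve for the discriminant hypersurface $\{\Disc = 0\} \subset \Ad^n$; its validity rests on $\{\Disc = 0\}$ being geometrically irreducible with singular locus of codimension $\ge 2$ in the coefficient space, which lets one fiber the bad locus over a lower-dimensional subscheme and extract the required savings. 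Porting the classical $\ZZ$-argument to $\cO_K$ is essentially formal via Minkowski, and the sublattice conditions on $f_{n-1}, f_n$ only change constants since they are independent of $\mfp$ for $\mfp \neq \mfa$. With the tail estimate in hand, combining the three steps gives a positive proportion of $\cF(X)$ satisfying (1)--(3), in particular infinitely many such $f$.
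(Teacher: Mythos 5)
Your proposal takes a genuinely different route from the paper, and the difference is not cosmetic: you are trying to run a squarefree sieve over the full $\binom{n+1}{2}$-dimensional family of monic $n$-ics, whereas the paper follows Kedlaya and reduces to a \emph{one-parameter} family. Concretely, the paper fixes $a_1,\dots,a_{n-1}$ cleverly so that $P_{a,b}(x)=b+\int_0^x n(t-a_1/n)(t-a_2)\cdots(t-a_{n-1})\,dt$ has discriminant
\[
\Delta_{a,b}=(n^nP_{a,0}(a_1/n)+n^nb)\prod_{i=2}^{n-1}(P_{a,0}(a_i)+b),
\]
a product of \emph{linear} polynomials in the single variable $b$. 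The squarefree sieve then becomes a one-dimensional sieve for a product of linear forms, whose large-prime tail is trivially $O(N^{\deg K-1/2})$ because each linear congruence $c_ib+d_i\equiv0\pmod{\mfp^2}$ pins $b$ to one residue class modulo $\mfp^2$. That is the entire point of Kedlaya's construction.

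The gap in your proposal is exactly the step you flagged as the ``main obstacle,'' and your dismissal of it is not justified. The Ekedahl/Poonen geometric sieve, applied to the codimension-$\ge 2$ singular locus of $\{\Disc=0\}\subset\Ad^n$, controls only those $f$ for which $f\bmod\mfp$ is a \emph{singular} point of the discriminant hypersurface (a triple root, or two double roots). It does \emph{not} control the generic way $\mfp^2\mid\Disc(f)$ arises, namely when $f\bmod\mfp$ has a single double root (a smooth point of $\{\Disc=0\}$) and the $\mfp$-adic lift happens to land in the codimension-one slice with $v_\mfp(\Disc f)\ge 2$. That slice has codimension one modulo $\mfp^2$, so the naive density-times-volume heuristic $\rho_\mfp\ll\Nm(\mfp)^{-2}$ cannot be converted into a uniform lattice-point bound once $\Nm(\mfp)$ exceeds a small power of $X$; the error term in the CRT box count swamps the main term. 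Proving the tail bound in this regime is precisely the content of Bhargava's 2014 geometric-sieve theorem for monic-polynomial discriminants, which requires embedding the pairs $(f,\mfp)$ into the space of rank-$n$ rings with a distinguished ideal and invoking reduction theory for $\GL_n(\ZZ)$; it is not a corollary of the smoothness-away-from-codimension-two observation, and the claim that porting it to $\cO_K$ is ``essentially formal via Minkowski'' is also unsupported (the relevant reduction theory over $\cO_K$ carries genuine additional difficulties and, to my knowledge, is not in the literature in the generality you need). So as written the proposal assumes a theorem that is both deeper than you acknowledge and not available over $\cO_K$; the paper avoids all of this by collapsing the sieve to linear polynomials in one variable.
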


The proof of \cref{sievethm} is a slight generalization of the proof given by Kedlaya when proving that there exist infinitely many irreducible monic polynomials over $\ZZ$ with squarefree discriminant \cite{ked}. 

\subsection{Acknowledgements}
The author would like to thank Ravi Vakil and Kiran Kedlaya for valuable conversations. The author was supported by the NSF under grant number DMS2303211. Additionally, the author would like to thank the referee for their thorough and helpful comments on an earlier version on this manuscript.

\section{Construction of $R_f(\mfa)$}
By work of Wood \cite{wood}, a binary $n$-ic form $f_0 x^n + \dots + f_n y^n \in \Sym^n(\cO_K \oplus \cO_K)$ gives rise to a rank $n$ ring $R_f$ as follows. As an $\cO_K$-module, put $R_f \coloneqq \cO_K^{n}$ and label the copies of $\cO_K$ so we write:
\[
	R_f = (\cO_K)_0 \oplus \dots \oplus (\cO_K)_{n-1}.
\]
Put $u_1,\dots,u_{n-1} \coloneqq 1$ and $u_n \coloneqq -f_n$. Define $1 \in (\cO_K)_0$ to be the multiplicative identity in $R_f$. For $1 \leq i \leq j \leq n-1$ and $1 \leq k \leq n$, define the multiplication maps $(\cO_K)_i \otimes (\cO_K)_j \rightarrow (\cO_K)_k$ by:
\begin{enumerate}
\item $x \otimes y \rightarrow -f_{i + j - k}u_k xy$ if $\max(i + j - n, 1) \leq k \leq i$;
\item $x \otimes y \rightarrow f_{i + j - k}u_k xy$ if $j < k \leq \min(i + j, n)$;
\item and $x \otimes y \rightarrow 0$ otherwise.
\end{enumerate}
In the notation above, $(\cO_K)_n$ refers to $(\cO_K)_0$.

For any nonzero integral ideal $\mfa$ of $K$, define $R_f(\mfa)$ to be the locally free $\cO_K$-module given by:
\[
	R_f(\mfa) \coloneqq (\cO_K)_0 \oplus \dots \oplus \mfa^{-1}(\cO_K)_{n-1}.
\]
There is a natural inclusion of $\cO_K$-modules given by $R_f \subseteq R_f(\mfa)$.

\begin{lemma}
$R_f(\mfa)$ has an $\cO_K$-algebra structure extending that of $R_f$ if and only if $f_{n-1} \in \mfa$ and $f_n \in \mfa^2$. When $f_{n-1} \in \mfa$ and $f_n \in \mfa^2$, there is a unique $\cO_K$-algebra structure on $R_f(\mfa)$ extending that of $R_f$, the Steinitz class of $R_f(\mfa)$ is $[\mfa^{-1}]$ and $\Disc(R_f(\mfa)) = \mfa^{-2}\Disc(R_f)$.
\end{lemma}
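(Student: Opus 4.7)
The plan is to reduce everything to a local computation at each prime $\mfp$ of $\cO_K$. Since $\mfa$ is locally principal, we choose a generator $a$ with $\mfa_\mfp = (a)$, and then $R_f(\mfa)_\mfp$ becomes a free $\cO_{K,\mfp}$-module with basis $e_0, e_1, \ldots, e_{n-2}, a^{-1}e_{n-1}$, where $e_i$ denotes a generator of $(\cO_K)_i$. The question of extending the algebra structure from $R_f$ to $R_f(\mfa)$ reduces to asking whether the finitely many products of basis elements of $R_f(\mfa)_\mfp$ all lie back in $R_f(\mfa)_\mfp$.

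The crucial product is $(a^{-1}e_{n-1})^2 = a^{-2} e_{n-1}^2$. Using the given multiplication rules, for $n\ge 3$ the nonzero terms come from $k \in \{n-2, n-1, n\}$, yielding
\[
e_{n-1}^{\,2} = -f_n\, e_{n-2} - f_{n-1}\, e_{n-1} - f_{n-2} f_n,
\]
where the constant term arises from the factor $\zeta_n = -f_n$ at $k=n$ (the case $n=2$ being handled analogously). For $a^{-2} e_{n-1}^{\,2}$ to lie in $R_f(\mfa)_\mfp$, the coefficient of $e_{n-2}$ must lie in $\cO_{K,\mfp}$, the coefficient of $e_{n-1}$ in $a^{-1}\cO_{K,\mfp}$, and the constant term in $\cO_{K,\mfp}$. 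These translate precisely to $f_n \in \mfa^2$ and $f_{n-1} \in \mfa$, with the third condition subsumed by the first. This proves necessity.

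For sufficiency I would compute $e_i \cdot e_{n-1}$ for $1 \le i \le n-2$: each such product is a linear combination of basis vectors whose coefficients lie in $\{-f_n, -f_{n-1}, -f_{i-1}f_n\}$, so multiplication by $a^{-1}$ keeps the product in $R_f(\mfa)_\mfp$ under the hypotheses $f_n \in \mfa^2 \subseteq \mfa$ and $f_{n-1} \in \mfa$. All remaining products of basis elements already lie in $R_f \subseteq R_f(\mfa)$. Uniqueness of the extension is automatic, since $R_f \otimes_{\cO_K} K = R_f(\mfa) \otimes_{\cO_K} K$ canonically and $R_f(\mfa)$ is an $\cO_K$-lattice inside this $K$-algebra; associativity and commutativity are inherited from $R_f \otimes K$.

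The Steinitz class claim $\St(R_f(\mfa)) = [\mfa^{-1}]$ is immediate from the $\cO_K$-module decomposition $R_f(\mfa) = \cO_K^{n-1} \oplus \mfa^{-1}$ built into the definition. For the discriminant, compute the Gram matrix of the trace pairing on $R_f(\mfa)_\mfp$ in the basis $\{e_0,\dots,e_{n-2}, a^{-1}e_{n-1}\}$: relative to the Gram matrix of $R_f$ in $\{e_0,\dots,e_{n-1}\}$, the last row and column are each scaled by $a^{-1}$, so the determinant picks up a factor of $a^{-2}$. Gluing the local computations yields the ideal identity $(\Disc R_f(\mfa)) = \mfa^{-2}(\Disc R_f)$. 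The main obstacle is the careful bookkeeping of the multiplication table — in particular tracking the $\zeta_n = -f_n$ contribution responsible for the constant term $-f_{n-2}f_n$ in $e_{n-1}^{\,2}$, which is precisely what forces $f_n \in \mfa^2$ rather than merely $f_n \in \mfa$; beyond that, everything reduces to standard manipulations with free modules and trace forms.
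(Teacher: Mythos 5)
Your proof is correct, and it is essentially the verification the paper leaves implicit: the paper's own proof consists only of the two assertions that the first claim ``follows directly from the multiplication table'' and the second is ``immediate from the construction.'' Your localization at each prime, the explicit computation of $(a^{-1}e_{n-1})^2$ and $e_i\cdot(a^{-1}e_{n-1})$ against Wood's multiplication rules, the uniqueness observation via $R_f\otimes_{\cO_K}K = R_f(\mfa)\otimes_{\cO_K}K$, and the Gram-matrix scaling for the discriminant all match the intended reading of that multiplication table.
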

\begin{proof}
The first statement follows directly from the multiplication table. The second is immediate from the construction.
\end{proof}

\section{Proof of \cref{sievethm}}
The proof given in this section is essentially that given by Kedlaya \cite{ked}. The main differences are that we work over a number field and impose a few additional congruence conditions needed for our application. We repeat much of Kedlaya's construction verbatim, and encourage the reader to refer to Kedlaya's paper for more detail. Fix a class $[\mfa] \in \Cl(K)$, and pick a representative prime ideal $\mfa$ coprime to $(n!)$. For $a_1,a_2,\dots,a_{n-1},b \in \cO_K$, put:
\[
	Q_{\vec{a}}(x) \coloneqq n(x - a_1/n)(x - a_2)\dots (x - a_{n-1})
\]
\[
	P_{\vec{a},b}(x) \coloneqq  b + \int_{0}^x Q_{\vec{a}}(t) \, dt
\]
The integral in the definition of $P_{\vec{a},b}(x)$ should be interpreted formally; $Q_{\vec{a}}(x)$ is a polynomial. The principal ideal generated by the discriminant of $P_{\vec{a},b}$ is denoted $\Delta_{\vec{a},b}$ and is equal to the resultant of $P_{\vec{a},b}(x)$ and $Q_{\vec{a}}(x)$. So we obtain the following equality of ideals:
\[
	\Delta_{\vec{a},b} = (n^n P_{\vec{a},0}(a_1/n) + n^nb)\prod_{i = 2}^{n-1}(P_{\vec{a},0}(a_i) + b).
\]

Suppose $a_1 \in \mfa$ and $b \in \mfa^2$. If we write $P_{\vec{a},b}(x) = x^n + f_1x^{n-1} + \dots + f_n$ and assume that $P_{\vec{a},b}(x)$ has coefficients in $\cO_K$, then $f_n = b \in \mfa^2$ and
\[
	f_{n-1} = (-1)^{n-1} \prod_{i = 1}^{n-1} a_i \in \mfa.
\]
Furthermore, $\mfa^2$ divides $(n^n P_{\vec{a},0}(a_1/n) + n^nb)$ and hence divides $\Delta_{\vec{a},b}$. Therefore, we wish to show that there exists a choice of $a_1,\dots,a_{n-1},b$ with  $a_1 \in \mfa$ and $b \in \mfa^2$ such that $P_{\vec{a},b}(x)$ has integral coefficients, is irreducible, and the ideal $\mfa^{-2}\Delta_{\vec{a},b}$ is squarefree. We will do this by fixing an appropriate choice of $a_1,\dots,a_{n-1}$ and then applying a squarefree sieve to choose $b$.

We will first show that there exist $a_1,\dots,a_{n-1} \in \cO_K$ such that:
\begin{enumerate}
\item $a_1 \in \mfa$ and $a_i \equiv -1 \pmod{\mfa}$ for $i = 2,\dots,n-1$;
\item $P_{\vec{a},0}$ has integral coefficients;
\item for every prime ideal $\mfp \neq \mfa$, there exists $b$ such that $\Delta_{\vec{a},b}$ is not divisible by $\mfp^2$;
\item there exists $b_2 \in \mfa^2$ such that $\Delta_{\vec{a},b_2}$ is not divisible by $\mfa^3$;
\item there exists a prime ideal $\mfp_1$ and $b_1 \in \cO_K$ such that $P_{\vec{a},b_1}$ is irreducible modulo $\mfp_1$;
\item and $P_{\vec{a},0}(a_1/n),P_{\vec{a},0}(a_2),\dots,P_{\vec{a},0}(a_{n-1})$ are all distinct.
\end{enumerate} 

To do this, we'll need the following two lemmas. The following lemma is essentially Lemma 2.3 of \cite{ked} but has been suitably modified for our application.

\begin{lemma}
\label{lemmaone}
Let $\gamma \in \cO_K$ be such that $(\gamma)$ is a principal prime ideal coprime to $(n(n-1))$.  Then there exist infinitely many primes $\mfp_1$ modulo which the polynomial $R(x) = x^n - \gamma^{n-1}x + \gamma$ is irreducible and its derivative $R'(x) = nx^{n-1} - \gamma^{n-1}$ splits into linear factors.
\end{lemma}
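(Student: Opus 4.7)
The plan is to apply Chebotarev's density theorem to the compositum $LM$, where $M$ denotes the splitting field of $R$ over $K$ and $L = K(\zeta_{n-1}, n^{1/(n-1)})$ denotes the splitting field of $R'$ over $K$. I would produce an element $\tilde\sigma \in \Gal(LM/K)$ with $\tilde\sigma|_L = \id$ (so that any prime with this Frobenius splits completely in $L$, forcing $R'$ to split into linear factors modulo it) and with $\tilde\sigma|_M$ acting as an $n$-cycle on the roots of $R$ (forcing $R$ to be irreducible modulo it). Chebotarev then yields infinitely many primes $\mfp_1$ of $\cO_K$, unramified in $LM$, whose Frobenius lies in the conjugacy class of $\tilde\sigma$.

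To build the required $n$-cycle, I would first note that $R(x) = x^n - \gamma^{n-1}x + \gamma$ is Eisenstein at $(\gamma)$: the constant term has $(\gamma)$-adic valuation exactly $1$, and the remaining non-leading coefficients all have valuation $\ge 1$. Because $(\gamma)$ is coprime to $n$, adjoining any root of $R$ to the completion $K_\gamma$ produces a totally and tamely ramified extension of degree $n$. Hence for any prime $\mfP$ of $M$ above $(\gamma)$, the inertia subgroup $I_\mfP \subseteq \Gal(M/K)$ is cyclic of order $n$ and acts transitively on the $n$ roots of $R$, i.e., as an $n$-cycle $\sigma$.

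On the other side, because $(\gamma)$ is coprime to both $n-1$ and $n$, the prime $(\gamma)$ is unramified in the cyclotomic subfield $K(\zeta_{n-1})$ and in the Kummer extension $L = K(\zeta_{n-1}, n^{1/(n-1)})$. Let $\tilde\mfP$ be a prime of $LM$ lying above $\mfP$, and let $I_{\tilde\mfP} \subseteq \Gal(LM/K)$ be its inertia subgroup. The restriction of $I_{\tilde\mfP}$ to $L$ is the inertia at $\tilde\mfP \cap L$ inside $\Gal(L/K)$, which is trivial; hence $I_{\tilde\mfP} \subseteq \Gal(LM/L)$. Under the standard restriction isomorphism $\Gal(LM/L) \xrightarrow{\sim} \Gal(M/L \cap M)$, the inertia $I_{\tilde\mfP}$ surjects onto $I_\mfP$. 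Taking $\tilde\sigma$ to be any preimage of $\sigma$ in $\Gal(LM/L) \subseteq \Gal(LM/K)$ yields the desired element.

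The main obstacle is precisely this inertia-lifting step: in order to realize the $n$-cycle $\sigma \in \Gal(M/K)$ as the restriction of an element of $\Gal(LM/K)$ that is trivial on $L$, we need $(\gamma)$ to be unramified in $L$. This is exactly what the hypothesis $((\gamma), n(n-1)) = 1$ secures, with the factor $n-1$ controlling ramification in $K(\zeta_{n-1})$ and the factor $n$ controlling ramification in the Kummer layer. Once $\tilde\sigma$ has been produced, Chebotarev applied to its conjugacy class in $\Gal(LM/K)$ completes the argument.
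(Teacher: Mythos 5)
Your proposal is correct and follows essentially the same route as the paper: the paper also observes that $(\gamma)$ is unramified in $L$ (so $R$ remains Eisenstein, hence irreducible, over $L$) and then applies Chebotarev over $K$ to the compositum of $L$ with the splitting field of $R$, selecting primes that split completely in $L$ while $R$ stays irreducible modulo them. You simply make explicit the Frobenius conjugacy class — the inertia‐generated $n$-cycle lifted to an element trivial on $L$ — that the paper's terse Chebotarev step leaves implicit.
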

\begin{proof}
The polynomial $R'(x)$ has splitting field $L = K(\zeta_{n-1},n^{1/(n-1)})$, in which $(\gamma)$ does not ramify because $(\gamma)$ does not divide $(n(n-1))$, where here $\zeta_{n-1}$ is an $(n-1)$th root of unity. Thus $R$ is an Eisenstein polynomial with respect to any prime above $(\gamma)$ in $L$; in particular, $R$ is irreducible over $L$. 

We first show that the Galois group of $R(x)$ over $L$ contains an $n$-cycle. The Chebotarev density theorem shows that the factorization types of the polynomial $R(x)$ modulo primes of $L$ are equidistributed over the Galois group of $R(x)$ over $L$. To show that $R(x)$ is irreducible modulo infinitely many primes of $L$, it is necessary to show that the Galois group of $R(x)$ over $L$ contains a full cycle. Let $\nu$ be a prime of $L$ above $\gamma$. Let $L_{\nu}$ be the completion of $L$ at $\nu$, and let $M_\nu/L_\nu$ be the extension given by $R(x)$. Since $R$ is Eisenstein, $L_\nu$ is totally ramified, and since $\nu \nmid n$, we have $L_\nu = K_\nu \left[\sqrt[n]{\pi}\right]$ for some uniformizer $\pi$ of $L_\nu$. The $n$-cycle in question is given by $\zeta_n^i \sqrt[n]{\pi} \rightarrow \zeta_n^{i+1}\sqrt[n]{\pi}$.

Now, the Chebotarev density theorem shows that there are infinitely many prime ideals of $L$ of degree $1$ over $K$ modulo which $R(x)$ is irreducible. The norm to $K$ of any such prime ideal is a prime ideal of the desired form.
\end{proof}

\begin{lemma}[Kedlaya, Lemma 2.4 \cite{ked}]
\label{lemmatwo}
For any field $F$ of characteristic zero, there exist $a_1,\dots,a_{n-1} \in F$ such that
\[
	P_{\vec{a},0}(a_1/n),P_{\vec{a},0}(a_2),\dots,P_{\vec{a},0}(a_{n-1})
\]
are distinct.
\end{lemma}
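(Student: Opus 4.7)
Since $Q_a(x) = P_{a,0}'(x)$ by construction, the values $a_1/n, a_2, \ldots, a_{n-1}$ are precisely the critical points of $P_{a,0}$, and the quantities in the statement are its critical values. The plan is to treat $a_1, \ldots, a_{n-1}$ as indeterminates and show that
\[
	\Delta(a_1, \ldots, a_{n-1}) \coloneqq \prod_{i < j} (v_i - v_j) \in \QQ[a_1, \ldots, a_{n-1}],
\]
where $v_1 = P_{a,0}(a_1/n)$ and $v_k = P_{a,0}(a_k)$ for $2 \leq k \leq n-1$, is not identically zero. Granted this, any field $F$ of characteristic zero is infinite, so $\Delta$ cannot vanish on all of $F^{n-1}$, and any tuple in the complement of its zero locus furnishes the required $a_1, \ldots, a_{n-1} \in F$.

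To certify that $\Delta \not\equiv 0$, it suffices to exhibit a single tuple—over any convenient extension field, such as $\ol{\QQ}$—for which the critical values are distinct. A convenient witness is the polynomial $P(x) = x^n + x$: its derivative $nx^{n-1} + 1$ has $n-1$ distinct roots $\alpha_1, \ldots, \alpha_{n-1}$, namely the $(n-1)$-th roots of $-1/n$, and at each one the relation $\alpha_i^{n-1} = -1/n$ gives
\[
	P(\alpha_i) = \alpha_i^n + \alpha_i = -\alpha_i/n + \alpha_i = \tfrac{n-1}{n}\alpha_i,
\]
so the critical values are nonzero scalar multiples of the distinct $\alpha_i$ and are therefore distinct themselves. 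Setting $a_1 \coloneqq n\alpha_1$ and $a_i \coloneqq \alpha_i$ for $2 \leq i \leq n-1$, the identities $P'(x) = Q_a(x)$ and $P(0) = 0$ together yield $P_{a,0}(x) = \int_0^x P'(t)\,dt = P(x)$, and so $\Delta$ is nonzero at this tuple.

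I do not anticipate any serious obstruction. The verification that $x^n + x$ has distinct critical values is a short computation, and the passage from $\Delta \not\equiv 0$ to the existence of $F$-rational $a_i$ is the standard density argument for infinite fields. If one wished to avoid passing to $\ol{\QQ}$, essentially the same witness works directly in $F$ by choosing $a_2, \ldots, a_{n-1}$ to be any distinct elements of $F$ and then noting that $\Delta$, viewed as a polynomial in $a_1$ alone, is nonzero—but the witness approach above is the cleanest.
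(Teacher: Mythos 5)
Your proof is correct and follows the same strategy as Kedlaya's cited proof: view the product of differences of critical values as a polynomial in $\QQ[a_1,\dots,a_{n-1}]$, show it is not identically zero by exhibiting an explicit witness polynomial whose critical values are nonzero scalar multiples of its distinct critical points, and conclude by the infinitude of a characteristic-zero field. The only cosmetic difference is the choice of witness: you use $x^n + x$ (critical points the $(n-1)$-th roots of $-1/n$), while Kedlaya uses $x^n - nx$ (critical points the $(n-1)$-th roots of unity); both yield critical values of the form (nonzero constant)$\cdot\alpha_i$ and so work equally well. One small caution: your closing alternative --- fixing distinct $a_2,\dots,a_{n-1}\in F$ and varying only $a_1$ --- is not obviously sufficient without further argument, since it is not immediate that each difference remains a nonzero polynomial in $a_1$ alone; but this is an aside and the main argument stands.
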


With these two lemmas in hand, we now proceed with our proof of Theorem~\ref{sievethm}. Let $\gamma$, $\mfp_1$, and $R(x)$ be as in \cref{lemmaone} and ensure that $\mfp_1 \nmid n! \mfa$. Apply \cref{lemmatwo} to obtain $a'_1,\dots,a'_{n-1}$ so that 
\[
	P_{\vec{a}',0}(a'_1/n),P_{\vec{a}',0}(a'_2),\dots,P_{\vec{a}',0}(a'_{n-1})
\]
have distinct values. Choose a prime ideal $\mfp_2 \nmid n!\mfa\mfp_1$ such the reductions have well-defined and distinct values modulo $\mfp_2$. Now use the Chinese remainder theorem to choose $a_1,\dots,a_{n-1} \in \cO_K$ subject to the following congruence conditions:
\begin{itemize}
\item $a_1$ is coprime to $(n)$ and is contained in $\mfa$ and the ideal generated by $n-1$;
\item the integers $a_2,\dots,a_{n-1}$ are contained in $(n!)$ and $a_i \equiv -1$ modulo $\mfa$ for $i = 2,\dots,n-1$;
\item the quantities $a_1/n,\dots,a_{n-1}$ are congruent to the roots of $R'(x)$ modulo $\mfp_1$;
\item and $a_i \equiv a'_i$ modulo $\mfp_2$;
\item for $i = 2,\dots,n-1$, consider the polynomial
\[
	F(x_2,\dots,x_{n-2}) = \prod_{i = 2}^{n-1} P_{(0,x_2,\dots,x_{n-1}),0}(x_i)
\]
as an element of the polynomial ring $\QQ[x_2,\dots,x_{n-1}]$. The degree of this polynomial is $n^{n-2}$. The denominators of the coefficients of $F$ are prime to $\mfa$, so we may reduce $F$ modulo $\mfa$ and consider it as an element of $\cO_K/\mfa[x_2,\dots,x_{n-1}]$. It is easy to check that the polynomial $F$ is nonzero modulo $\mfa$. By assumption, $\#(\cO_K/\mfa) > n^{n-2}$, so by the Schwartz-Zippel lemma, the vanishing locus of $F$ inside the affine space $\AA^{n-2}_{\cO_K/\mfa}$ is not the entirety of the affine space. Choose $a_2,\dots,a_{n-1}$ so that their reductions modulo $\mfa$ lie outside the vanishing locus of $F$.
\end{itemize}

We'll now show that this choice of $a_1,\dots,a_{n-1}$ satisfies conditions $(1)-(6)$.
\begin{enumerate}
\item Follows directly from the choice of $a_i$.
\item Observe that:
\[
	Q_{\vec{a}}(x) \equiv (nx - a_1)x^{n-2} = nx^{n-1} - a_1x^{n-2} \pmod{(n!)},
\]
so $P_{\vec{a},0}$ has integer coefficients.
\item Suppose that $\mfp \neq \mfa$ is a prime ideal dividing $(n!)$. Then for $b \in \cO_K$, we have $P_{\vec{a},0}(a_i)+ b \equiv b \pmod{\mfp}$ for $i = 2,\dots,n-1$ because $a_i \in \mfp$. Now suppose $\mfp \mid (n)$. Then because $Q_{\vec{a}}(x) \equiv nx^{n-1} - a_1x^{n-2} \pmod{(n!)}$, we have $P_{\vec{a},0} = x^n - \frac{a_1}{n-1}x^{n-1} \pmod{(n!)}$. So:
\begin{align*}
n^nP_{\vec{a},0}(a_1/n) + n^nb &\equiv n^{n}\bigg(\frac{a_1^n}{n^n} - \frac{a_1}{n-1}\frac{a_1^{n-1}}{n^{n-1}}\bigg) \pmod{\mfp} \\
&\equiv a_1^n - \frac{n}{n-1} a_1^n \pmod{\mfp} \\
&\equiv a_1^n \pmod{\mfp}.
\end{align*}
If $\mfp \nmid (n)$, then there exists $b \not\equiv 0 \pmod{\mfp}$ can be chosen such that 
\[
	n^nP_{\vec{a},0}(a_1/n) + n^nb \not\equiv 0 \pmod{\mfp}
\] 
Consequently, there exists $b$ such that $\Delta_{\vec{a},b}$ is not divisible by $\mfp$. Now suppose that $\mfp$ is a prime ideal coprime to $\mfa(n!)$, so $\lvert \cO_K/\mfp \rvert > n$. There are at least $\lvert \cO_K/\mfp \rvert - n$ choices of $b \in \cO_K/\mfp$ for which $\Delta_{\vec{a},b}$ is indivisible by $p$, because each linear factor rules out exactly one choice of $b$. 
\item Choose $b \in \mfa^2$ so that $P_{\vec{a},0}(a_1/n) + b \not \equiv 0 \pmod{\mfa^3}$. To conclude it suffices to show that $P_{\vec{a},0}(a_i) \not \equiv 0 \pmod{\mfa}$ for all $i = 2,\dots,n-1$; this follows from the fact that $a_2,\dots,a_{n-2}$ were chosen so that their reductions modulo $\mfa$ lie outside the vanishing locus of $F$.
\item Ensured by the fact that the quantities $a_1/n,\dots,a_{n-1}$ are congruent to the roots of $R'(x)$ modulo $\mfp_1$.
\item Ensured by the fact that $a_i \equiv a'_i$ modulo $\mfp_2$.
\end{enumerate}

Given such a choice of the $a_1,\dots,a_{n-1}$ above, let $T$ be the set of all $b \in \mfa^2$ such that $b \equiv b_1 \pmod{\mfp_1}$ amd $b \equiv b_2 \pmod{\mfa^3}$. For any $b \in T$, the polynomial $P_{\vec{a},b}(x) = x^n + f_1x^{n-1} + \dots + f_n$ is irreducible with integral coefficients such that $f_n \in \mfa^2$, $f_{n-1} \in \mfa$, $\mfp_1 \nmid \Delta_{\vec{a},b}$, and $\mfa^3 \nmid \Delta_{\vec{a},b}$. For $b \in K$, put $\lvert b \rvert = \sqrt{\sum_{i = 1}^{\deg(K)}\lvert \sigma_i(b)\rvert}$ where $\sigma_1,\dots,\sigma_{\deg(K)}$ are the embeddings of $K$ into $\CC$. The proof of \cref{sievethm} is concluded by the following lemma.

\begin{lemma}
Let $T$ be a set in $\cO_K$ defined by congruence conditions at a finite set of primes $S$. Let $c_1,\dots,c_k,d_1,\dots,d_k \in \cO_K$ and put $A(x) = \prod_{i = 1}^k(c_i x + d_i)$. Suppose that $A(x)$ is squarefree away from $S$. For $\mfp \neq \mfa$, let
\[
	a(\mfp) = \frac{\#\{b \in \cO_K/\mfp^2 \; \colon \; A(b) \not \equiv 0 \pmod{\mfp^2} \}}{\#(\cO_K/\mfp^2)}
\]
Then
\[
	\lim_{N \rightarrow \infty}\frac{\#\{b \in T \colon \lvert b\rvert \leq N \text{ and } A(b) \text{ is squarefree away from } S\}}{\#\{b \in T \colon \lvert b\rvert \leq N\}} = \prod_{\mfp \notin S}a(\mfp).
\]
If $a(\mfp) > 0$ for all $\mfp \notin S$, then $\prod_{\mfp \notin S}a(\mfp) > 0$.
\end{lemma}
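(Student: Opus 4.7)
The plan is to execute a classical squarefree sieve over $\cO_K$, directly generalizing Kedlaya's strategy for $\ZZ$. First, view $\cO_K$ as a full-rank lattice in $K \otimes_\QQ \RR$, so $T$ is a union of finitely many cosets of an ideal (determined by the conditions at $S$) and $\{b : \lvert b\rvert \leq X\}$ is a fixed compact region scaled by $X$. Standard Lipschitz-principle lattice-point counting then gives $V(X) := \#\{b \in T : \lvert b\rvert \leq X\} = c_T X^{\deg K} + O(X^{\deg K - 1})$ for some $c_T > 0$.

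For a truncation parameter $M$, let $S_M := \{\mfp \notin S : \lvert\mfp\rvert \leq M\}$. By the Chinese remainder theorem, the condition ``$\mfp^2 \nmid A(b)$ for all $\mfp \in S_M$'' is determined by $b \bmod \prod_{\mfp \in S_M} \mfp^2$, so the same lattice-point count yields
\[
\#\{b \in T : \lvert b\rvert \leq X,\ \mfp^2 \nmid A(b)\ \forall\, \mfp \in S_M\} \;=\; V(X)\prod_{\mfp \in S_M} a(\mfp) \;+\; O_M(X^{\deg K - 1}).
\]
Dividing by $V(X)$ and letting $X \to \infty$ produces the truncated density $\prod_{\mfp \in S_M} a(\mfp)$. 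The full density $\prod_{\mfp \notin S} a(\mfp)$ emerges after $M \to \infty$, provided the tail is controlled.

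The tail bound is the heart of the proof. Because $A(x) = \prod_{i=1}^k (c_i x + d_i)$ is a product of linear forms, $\mfp^2 \mid A(b)$ splits into two cases: (i) $\mfp^2 \mid (c_i b + d_i)$ for some single $i$, or (ii) $\mfp \mid (c_i b + d_i)$ and $\mfp \mid (c_j b + d_j)$ for distinct $i \neq j$. Case (ii) forces $\mfp \mid (c_i d_j - c_j d_i)$, which is a nonzero element of $\cO_K$ by the squarefree hypothesis on $A$; only finitely many such primes exist, and they are absorbed into $S_M$ once $M$ is large. For case (i), I would split the remaining primes into a small range $M < \lvert\mfp\rvert \leq X^\delta$ and a large range $X^\delta < \lvert\mfp\rvert$; primes of norm exceeding $C X^{\deg K / 2}$ do not contribute, since $\mfp^2 \mid (c_i b + d_i) \neq 0$ forces $\lvert\mfp\rvert^2 \leq \lvert N_{K/\QQ}(c_i b + d_i)\rvert \ll X^{\deg K}$. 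In the small range, Lipschitz counting bounds the per-prime contribution by $O(X^{\deg K}/\lvert\mfp\rvert^2 + X^{\deg K - 1})$, which sums to $o(X^{\deg K})$ as $M \to \infty$ via convergence of $\sum_\mfp 1/\lvert\mfp\rvert^2$. In the large range, dualize by counting values $\alpha = c_i b + d_i$ of height $O(X)$ divisible by some $\mfp^2$ with $\lvert\mfp\rvert > X^\delta$; the sum $\sum_{\lvert\mfp\rvert > X^\delta} X^{\deg K}/\lvert\mfp\rvert^2 = O(X^{\deg K - \delta}) = o(X^{\deg K})$ then closes the estimate.

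For positivity, the same case analysis gives $1 - a(\mfp) = O(1/\lvert\mfp\rvert^2)$ for all but finitely many primes, so $\sum_{\mfp \notin S}(1 - a(\mfp))$ converges and $\prod_{\mfp \notin S} a(\mfp)$ converges to a positive value whenever no individual $a(\mfp)$ vanishes. The main obstacle is the intermediate-range tail estimate for case (i), where the Lipschitz error terms naively threaten to dominate; this is precisely the input from Kedlaya's argument that makes the squarefree sieve unconditional for products of linear forms, and the lattice-theoretic reformulation transfers to $\cO_K$ essentially unchanged.
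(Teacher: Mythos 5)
Your proposal follows essentially the same path as the paper: reduce to a tail estimate over primes of large norm, split $\mfp^2 \mid A(b)$ into the cases where $\mfp^2$ divides a single linear factor or $\mfp$ divides two distinct factors (the latter forcing $\mfp$ to divide a fixed nonzero resultant $c_i d_j - c_j d_i$, hence only finitely many primes), count lattice points in residue classes mod $\mfp^2$ to bound each range, and deduce positivity from $1 - a(\mfp) = O(\Nm(\mfp)^{-2})$. The paper's proof is terser (it simply asserts the two displayed sums over $\Nm(\mfp) > \sqrt{N}$ are each $O(N^{\deg K - 1/2})$) while you fill in the range-splitting and dualization explicitly, but the underlying argument and key inputs are identical.
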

\begin{proof}
If $a(\mfp) > 0$ for all $\mfp \notin S$, then there exists a $k$ such that:
\[
	\prod_{\mfp \notin S}a(\mfp) \geq \prod_{\mfp \notin S} \bigg (1 - \frac{k}{N_{K/\QQ}(\mfp)^2}\bigg) > 0,
\]
which proves the second claim. To prove the first claim it suffices to prove the following tail estimate:
\[
	\#\{b \in T \colon \lvert b \rvert \leq N, \exists \; \mfp \text{ s.t. } \Nm(\mfp) > \sqrt{N} \text{ and } A(b) \equiv 0 \pmod{\mfp^2}\} = o(N^{\deg(K)}).
\]
The tail estimate is bounded above by the sum of the following expressions:
\[
	\sum_{i = 1}^k \sum_{\Nm(\mfp) > \sqrt{N}}\#\{b \in T \colon \lvert b \rvert \leq N \text{ and } c_ib + d_i \equiv 0 \pmod{\mfp^2}\}
\]
\[
	\sum_{1 \leq i < j \leq k}^k \sum_{\Nm(\mfp) > \sqrt{N}}\#\{b \in T \colon \lvert b \rvert \leq N \text{ and } c_ib + d_i \equiv c_jb + d_j \equiv 0 \pmod{\mfp}\}
\]
Both sums are bounded above by $O(N^{\deg(K)-1/2})$, completing the proof.
\end{proof}

\begin{bibdiv}
\begin{biblist}

\bib{bsw}{article}{
	author = {Bhargava, Manjul},
	author = {Shankar, Arul},
	author = {Wang, Xiaoheng}
	title = {Geometry-of-numbers over global fields I: Prehomogeneous vector spaces},
	year={2015},
    eprint={1512.03035},
    archivePrefix={arXiv},
    primaryClass={math.NT},
}

\bib{kab}{article}{
	author = {Kable, Anthony C.},
	author = {Wright, David J.}
	title = {Uniform distribution of the Steinitz invariants of
quadratic and cubic extensions},
	journal={Compositio Math.},
	volume = {142},
	year={2006},
	pages={84--100},
}

\bib{ked}{article}{
  author = {Kedlaya, Kiran},
    title = {A construction of polynomials with squarefree discriminants},
    journal = {Proceedings of the American Mathematical Society},
    volume = {140},
    number = {9},
    pages = {3025-3033},
    year = {2012},
    month = {09},
    issn = {0002-9939},
    url = {http://hdl.handle.net/1721.1/80368},
    eprint = {https://www.ams.org/journals/proc/2012-140-09/S0002-9939-2012-11231-6/S0002-9939-2012-11231-6.pdf},
}

\bib{wood}{article}{
  author = {Wood, Melanie M.},
    title = {Rings and ideals parameterized by binary n-ic forms},
    journal = {Journal of the London Mathematical Society},
    volume = {83},
    number = {1},
    pages = {208-231},
    year = {2011},
    month = {01},
    issn = {0024-6107},
    doi = {10.1112/jlms/jdq074},
    url = {https://doi.org/10.1112/jlms/jdq074},
    eprint = {https://academic.oup.com/jlms/article-pdf/83/1/208/2434402/jdq074.pdf},
}

\end{biblist}
\end{bibdiv}

\end{document}